\newtheorem*{thm*}{Theorem}
\newtheorem{thm}{Theorem}
\newtheorem{lem}[thm]{Lemma}
\def\:{\colon}
\def\C{\mathbb{C}}
\def\R{\mathbb{R}}
\def\D{\mathbb{D}}
\def\N{\mathbb{N}}
\def\Z{\mathbb{Z}}
\def\Q{\mathbb{Q}}
\def\bC{\widehat{\mathbb{C}}}
\def\id{\mathrm{id}}
\def\ga{\gamma} 
\def\ra{\rightarrow} 
\def\sub{\subseteq} 
\begin{document}
\title{Canonical embeddings of  pairs of arcs}
\author{Mario Bonk}
\thanks{M.B.\ was partially supported by  NSF grants DMS-1808856 and DMS-2054987.}
\dedicatory{Dedicated to the memory of Walter Hayman}
\address {Department of Mathematics, University of California, Los Angeles,
CA 90095}
\email{mbonk@math.ucla.edu}
\author{Alexandre Eremenko}
\address{Department of Mathematics, Purdue University, West Lafayette, IN 47907}
\email{eremenko@purdue.edu}

\keywords{Conformal map, hyperbolic metric, conic singularity.}
\subjclass[2010]{Primary:  30F45, 60J67, 81T40.} 
\email{}
\date{September 6, 2021}
\begin{abstract} We show that for given four points in the Riemann sphere
and a given isotopy class of two disjoint arcs connecting these points
in two pairs, there exists a unique configuration with the property
that each arc is a hyperbolic geodesic segment
in the complement of the other
arc.
\end{abstract}

\maketitle

In the recent paper \cite{Wang}, Peltola and Wang made a remarkable
restatement of results  in  \cite{EG, EG1} about the existence
and uniqueness of a real rational function with prescribed
real critical points.
To formulate  this precisely, we consider {\em chord diagrams}
in the closure $\overline \D$ of the unit disk $\D =\{z\in \C:|z|<1\}$ in the complex plane $\C$.  Such a chord diagram  has prescribed points  
$a_1,\ldots,a_{2d-2}$, $d\ge 2$, 
on the unit circle and $d-1$ disjoint crosscuts  $e_1, \dots , e_{d-1}$ 
in $\overline \D$ connecting pairs of these points. We  call
such a chord diagram {\em canonical} if every crosscut  $e_k$ is a hyperbolic
geodesic in the unique component of $\D \setminus \bigcup_{j\ne k}e_j$ 
that contains the interior points of  $e_k$.  

Theorem 1.1 in \cite{Wang} states
that for any prescribed points $a_1,\ldots,a_{2d-2}$
there is a unique canonical chord diagram in every combinatorial class.
This canonical chord diagram can be obtained from  the preimage of the real line
under a real rational function of degree $d$ with critical points at
$a_1,\ldots,a_{2d-2}$.
This theorem has important applications for  the study of
the Stochastic Loewner Evolution (SLE).

The number of combinatorial classes of chord diagrams
with prescribed vertices is finite: it is the Catalan number.
In this note we give a simple example of a similar problem
with four prescribed points and infinitely many canonical configurations.  

Our configurations consist
of four distinct points $a_0,a_1, a_2,a_3$ in the Riemann sphere $\bC=\C \cup\{\infty\} $ and two disjoint arcs $\gamma_0$ and $\gamma_1$, where 
$\gamma_0$ has the  endpoints $a_0$  and  $a_1$, and $\gamma_1$ has the 
endpoints $a_2$ and 
$a_3$. We say that two such configurations
are {\em equivalent} if the points are
the same, and the arcs
of the first configuration can be deformed into the arcs
of the second configuration
by an isotopy of the sphere that keeps the endpoints of the arcs fixed.
 A configuration is called {\em canonical}
if for each $k\in\{0,1\}$ the arc 
$\gamma_k$ is a hyperbolic 
geodesic segment in the simply connected
hyperbolic region $\bC\backslash \gamma_{1-k}$.
\begin{thm*} 
For  every equivalence class
of configurations, there exists a unique canonical configuration.
\end{thm*}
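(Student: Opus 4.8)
The plan is to uniformize the four-marked sphere as a flat \emph{pillowcase} and to realize the canonical arcs as images of straight segments on a torus. Concretely, I would first choose a lattice $\Lambda\sub\C$ and identify $(\bC;a_0,a_1,a_2,a_3)$ with the quotient $(\C/\Lambda)/\iota$, where $\iota(w)=-w$ is the elliptic involution and the four points $a_i$ are the images of the four $2$-torsion points of $T=\C/\Lambda$; the quotient map $\pi\: T\ra\bC$ is a Möbius image of the Weierstrass $\wp$-function, and the conformal type of $(\bC;a_0,\dots,a_3)$ fixes $\Lambda$ up to scaling. The induced flat metric on $\bC$ has a cone of angle $\pi$ at each $a_i$, which is the source of the conic singularities in the title. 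On $T$ every primitive direction $v=p\omega_1+q\omega_2$ (with $\gcd(p,q)=1$) determines a pair of parallel closed geodesics through the $2$-torsion points, and I would match these slopes with the isotopy classes of configurations; since the slopes form the countable set $\Q\cup\{\infty\}$, this is where the infinitude of canonical configurations comes from.

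For \textbf{existence}, fix an isotopy class, take the corresponding slope $v$, and let $\ell_0,\ell_1\sub T$ be the two straight lines in direction $v$ passing through the two complementary pairs of $2$-torsion points. Because each $\ell_k$ is invariant under $\iota$, it projects to a well-defined arc $\gamma_k=\pi(\ell_k)\sub\bC$; the two arcs are disjoint, $\gamma_0$ joins $a_0,a_1$ and $\gamma_1$ joins $a_2,a_3$. The crux is to show that $\gamma_0$ is a hyperbolic geodesic in $\Omega_1:=\bC\backslash\gamma_1$ (and symmetrically). Here I would argue intrinsically: cutting $T$ along the closed geodesic $\ell_1$ yields a flat cylinder $T\backslash\ell_1$ on which $\ell_0$ is a \emph{core} geodesic, and reflection of a flat cylinder across a core geodesic is an anti-conformal isometric involution $\hat R_0$. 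One checks that $\hat R_0$ commutes with $\iota$, so it descends to an anti-conformal involution $\tilde R_0$ of $\Omega_1=(T\backslash\ell_1)/\iota$ whose fixed-point set contains $\gamma_0$. Since the fixed-point set of an anti-conformal involution of a simply connected hyperbolic domain is a hyperbolic geodesic (conjugate the domain to $\D$, where the involution becomes $z\mapsto\bar z$), the arc $\gamma_0$ is the geodesic segment between $a_0$ and $a_1$, as required.

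For \textbf{uniqueness}, I would run this construction in reverse. Given any canonical configuration $(\gamma_0,\gamma_1)$, the geodesic condition provides the reflection $\tilde R_0$ of $\Omega_1$ across $\gamma_0$ and the reflection $\tilde R_1$ of $\Omega_0=\bC\backslash\gamma_0$ across $\gamma_1$; each is an anti-conformal involution of the annulus $A=\bC\backslash(\gamma_0\cup\gamma_1)$, and the composition $\tilde R_0\circ\tilde R_1$ is a conformal automorphism of $A$. Passing to the conformal double of $A$ across its two boundary slits reconstructs a torus carrying the involution $\iota$ together with the two reflections, hence a flat structure and a slope. Because the conformal type of the four-marked sphere fixes the lattice $\Lambda$ and the isotopy class fixes the slope, the configuration must coincide with the one built above, which gives uniqueness.

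The main obstacle I anticipate is the precise dictionary between the two metric pictures — proving that flat straightness on the pillowcase is \emph{equivalent} to the hyperbolic-geodesic condition, via the cylinder-reflection argument and the verification that $\hat R_0$ descends and fixes exactly the geodesic segment joining $a_0$ and $a_1$. A secondary, more topological point to pin down is the bijection between isotopy classes of configurations and primitive slopes on $T$, which rests on analyzing the $2$-fold branched cover $\pi$ and the action of the mapping class group of the four-punctured sphere; this is what guarantees that every equivalence class is hit exactly once.
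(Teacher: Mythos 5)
Your proposal is correct and follows essentially the same route as the paper: both identify the four-marked sphere with the flat pillowcase $(\C/\Lambda)/\iota$ (equivalently, the image of the Weierstrass $\wp$-function), characterize the hyperbolic-geodesic condition via anti-conformal reflections whose fixed-point sets are hyperbolic geodesics, realize the canonical arcs as projections of straight lines through the $2$-torsion points, and enumerate them via the correspondence between primitive slopes in $\Q\cup\{\infty\}$ and isotopy classes. The only organizational difference is that the paper reaches the flat structure by cutting along the two arcs and classifying the boundary-preserving anti-conformal involutions of the resulting annulus (its Lemma 2) before welding back, whereas you pass directly to the torus double; the mathematical content is the same.
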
 
Our proof will show that one can obtain an explicit description
of canonical configurations as follows. We may assume
without loss of generality that
\begin{equation}\label{a}
(a_0,a_1,a_2,a_3)=(\infty,e_1,e_2,e_3), 
\end{equation}
where
$e_1, e_2, e_3\in \C $ and $e_1+e_2+e_3=0$.
Let $\wp$ be the Weierstrass function satisfying
\begin{equation}\label{b} 
(\wp')^2=4(\wp-e_1)(\wp-e_2)(\wp-e_3).
\end{equation}
We denote the line segment joining two points $z,w\in \C$ by $[z,w]$. 
Then each  canonical con\-figuration for the points as in \eqref{a}
has the form 
\begin{equation}\label{c} 
\gamma_0=\wp([0,\omega_1/2]),\quad\gamma_1=
\wp([\omega_2/2, (\omega_1+\omega_2)/2]), 
\end{equation} 
where the pair $\omega_1$ and $\omega_2$
generates  the period lattice of $\wp$, and 
 $e_1=\wp(\omega_1/2)$, $e_2=\wp(\omega_2/2)$, and
$e_3=\wp((\omega_1+\omega_2)/2)$. 

\smallskip In order to prove our theorem, we first state some auxiliary facts. 
An anti-conformal {\em involution} of a region $D$ in the Riemann sphere $\bC$ is an anti-conformal
homeomorphism $\sigma$ of $D$ onto itself such that $\sigma
\circ \sigma=\id_{D}$, where $\id_{D}$ denotes the identity map on $D$.

We will use the following well-known   facts about anti-conformal involutions.

\begin{lem} Let $D$ be a simply connected hyperbolic region in $\bC$, 
and $\sigma$ be an anti-conformal involution
of $D$. Then the set of fixed points of $\sigma$ is a hyperbolic geodesic.
Conversely, for every hyperbolic geodesic there exists a unique anti-conformal
involution of $D$ 
that  fixes all points on this geodesic.
\end{lem}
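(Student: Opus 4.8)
The plan is to transport everything to the unit disk $\D$ and use the explicit description of its anti-conformal automorphisms. Since $D$ is simply connected and hyperbolic, the Riemann mapping theorem furnishes a conformal homeomorphism $\varphi\colon D\ra\D$. This $\varphi$ is an isometry for the respective hyperbolic metrics and carries hyperbolic geodesics to hyperbolic geodesics; moreover $\varphi\circ\sigma\circ\varphi^{-1}$ is again an anti-conformal involution. Hence it suffices to prove both assertions for $D=\D$.

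First I would classify the anti-conformal involutions of $\D$ that possess an interior fixed point. Any anti-conformal automorphism of $\D$ has the form $z\mapsto M(\bar z)$ for a conformal automorphism $M$ of $\D$, and such a map is an orientation-reversing isometry of the hyperbolic metric. If $\sigma$ fixes a point $p\in\D$, I conjugate by the automorphism sending $p$ to $0$; then $\sigma$ fixes $0$, which forces $M$ to be a rotation, so $\sigma(z)=e^{i\theta}\bar z$. Conjugating once more by $z\mapsto e^{i\theta/2}z$ reduces $\sigma$ to complex conjugation $z\mapsto\bar z$, whose fixed-point set is the diameter $(-1,1)$, a hyperbolic geodesic. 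Undoing the conjugations shows that $\mathrm{Fix}(\sigma)$ is the image of this diameter under an isometry, hence again a geodesic.

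The step that requires care is producing the interior fixed point in the first place, i.e.\ ruling out fixed-point-free anti-conformal involutions of $\D$. Here I would argue on the boundary: $\sigma$ extends to a homeomorphism of $\overline{\D}$ whose restriction to $\partial\D$ is an orientation-reversing involution of the circle, and therefore has two distinct fixed points $p,q$. The geodesic $g$ with endpoints $p$ and $q$ is mapped by the isometry $\sigma$ to the geodesic with the same endpoints, so $\sigma(g)=g$. Parametrizing $g$ isometrically by $\R$ with the two ends corresponding to $p$ and $q$, the restriction $\sigma|_g$ is an orientation-preserving isometric involution of $\R$, hence the identity; thus every point of $g$ is an interior fixed point and the previous paragraph applies. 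This establishes the first assertion, with $\mathrm{Fix}(\sigma)=g$.

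For the converse, given a geodesic $\ell\sub\D$ I conjugate by an automorphism of $\D$ taking $\ell$ to the diameter $(-1,1)$ and pull back complex conjugation; since the automorphism group of $\D$ acts transitively on geodesics, this produces an anti-conformal involution fixing $\ell$ pointwise. For uniqueness, if $\sigma_1$ and $\sigma_2$ are two such involutions, then $\tau=\sigma_1\circ\sigma_2$ is a conformal automorphism of $\D$ fixing every point of $\ell$; since $\ell$ has an accumulation point in $\D$, the identity theorem applied to $\tau(z)-z$ gives $\tau=\id$, and because $\sigma_2$ is an involution we conclude $\sigma_1=\sigma_2^{-1}=\sigma_2$. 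Transporting back through $\varphi$ completes both parts. I expect the existence of an interior fixed point to be the only genuine obstacle; the remaining arguments are standard normal-form and identity-theorem computations.
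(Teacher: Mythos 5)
Your proof is correct, and it diverges from the paper precisely at the step you identify as the crux: producing an interior fixed point of $\sigma$. The paper stays inside the disk. It first shows that $\sigma$ is a hyperbolic isometry (by composing with the reflection $\sigma_I(z)=\bar z$ to obtain a conformal automorphism) and then finds the fixed point by a purely metric involution trick: the geodesic segment $S$ joining a point $z_0$ to $\sigma(z_0)$ is invariant under $\sigma$ with its endpoints interchanged, so its midpoint is fixed. You instead go to the boundary: since $\sigma(z)=M(\bar z)$ extends to a homeomorphism of $\overline{\D}$, its restriction to $\partial\D$ is an orientation-reversing circle homeomorphism and therefore has two distinct fixed points (a degree-theoretic fact, valid even without the involution hypothesis), and the geodesic $g$ joining them is invariant with both ends fixed individually, so $\sigma|_g$ is an orientation-preserving isometric involution of $\R$, hence the identity. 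Both mechanisms are sound; the paper's midpoint argument is shorter, needs no boundary extension, and works verbatim for involutive isometries of any uniquely geodesic space, while your route pins down $\mathrm{Fix}(\sigma)=g$ before any normal form is invoked, at the cost of the circle-homeomorphism fact. Your uniqueness argument also differs in mechanism: you apply the identity theorem to the conformal automorphism $\sigma_1\circ\sigma_2$, which fixes $\ell$ pointwise, whereas the paper deduces uniqueness from its classification showing that every anti-conformal involution of $\D$ is the reflection $\sigma_C$ in its fixed geodesic $C$; both are complete, and yours has the small advantage of not depending on the preceding classification.
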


\begin{proof} By the Riemann mapping theorem, we may assume that $D$
is the unit disk $\D$. If $C$ is a hyperbolic geodesic in 
$\D$, then $C$ is an arc of a circle or a line segment 
that is orthogonal to the unit circle $\partial \D$.
Then reflection  in $C$ is an anti-conformal involution
$\sigma_C$ of $\D$ that fixes every point of $C$. 

Note that if $C=I=(-1,1)$, then $\sigma_I(z)=\bar z$ and so
the anti-conformal involution 
$\sigma_I$ is a hyperbolic isometry.  

Now let $\sigma$ be an arbitrary anti-conformal involution of $\D$.
Then $\sigma$ is also a hyperbolic isometry. Indeed, 
$\tau=\sigma \circ \sigma_I $ is a conformal automorphism
of $\D$ and hence a hyperbolic isometry. This implies that
$\sigma=\tau  \circ \sigma_I $ is a hyperbolic isometry as well.   

This in turn implies that $\sigma$ has a fixed point $w_0\in\D$, namely,
for $w_0$ we can take 
the midpoint of the hyperbolic geodesic segment $S$
joining some point $z_0\in \D $ with $\sigma(z_0)\in \D$.
To see this, note that $\sigma$ is an isometry
on $S=\sigma(S)$ that interchanges the endpoints of $S$. 

By conjugating with an auxiliary automorphism,
we may assume that $w_0=0$. Then $\tau=\sigma\circ\sigma_I $ 
is an automorphism of $\D$ that fixes $0$. Hence 
$\tau(z)=e^{i\theta} z$ with $\theta\in \R$. It follows that 
$\sigma(z)=(\tau\circ\sigma_I)(z)=  e^{i\theta}\overline z,$ 
and so $\sigma$ is equal to the reflection $\sigma_C$ in
the hyperbolic geodesic $C=\{ e^{i\theta/2 }t: -1<t<1\}$. In particular,
$\sigma$ fixes the points in $C$ and no other points.  

The argument also shows that each anti-conformal involution 
of $\sigma$  of $\D$ has the form $\sigma=\sigma_C$ for some
hyperbolic geodesic $C$. This implies that the fixed point set of $\sigma$
uniquely determines $\sigma$.    
\end{proof} 

\begin{lem} An anti-conformal involution $\sigma$
of an annulus  $A=\{ z\in \C :1<|z|<R\}$ with $R>1$  
that  leaves each boundary component  invariant is  of the form
$\sigma(z)=e^{i\theta}\overline{z}$ with  $\theta\in\R$.
\end{lem}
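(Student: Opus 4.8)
The plan is to reduce to the conformal case by composing $\sigma$ with the standard reflection $\rho(z)=\overline z$. First I would observe that $\rho$ is itself an anti-conformal involution of $A$ leaving each boundary circle invariant, since $|\overline z|=|z|$. Hence $\tau:=\sigma\circ\rho$ is a conformal automorphism of $A$ that also leaves each of the two boundary components $\{|z|=1\}$ and $\{|z|=R\}$ invariant. It then suffices to prove that any such $\tau$ is a rotation $z\mapsto e^{i\theta}z$; writing $\sigma=\tau\circ\rho$ (using $\rho\circ\rho=\id_A$) gives $\sigma(z)=e^{i\theta}\overline z$, as claimed, and one checks directly that this map is indeed an involution. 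In particular the involution hypothesis is not really needed: every anti-conformal automorphism of $A$ fixing each boundary component setwise automatically has this form.

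The heart of the argument is to show that a conformal automorphism $\tau$ of $A$ fixing each boundary component setwise is a rotation. I would first use the fact that $\tau$ extends to a homeomorphism of the closed annulus $\overline A$ (the boundary circles being real-analytic, so that Schwarz reflection applies), still preserving each boundary circle. Then I consider the bounded harmonic function $h(z)=\log|z|$, which solves the Dirichlet problem on $A$ with boundary values $0$ on $\{|z|=1\}$ and $\log R$ on $\{|z|=R\}$. Since $\tau$ maps each boundary circle to itself, $h\circ\tau$ is harmonic on $A$ with the same boundary values as $h$; by uniqueness for the Dirichlet problem (the maximum principle) we get $h\circ\tau=h$, i.e.\ $|\tau(z)|=|z|$ for all $z\in A$.

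Finally, since $\tau$ takes values in $A$ and $A$ avoids the origin, the function $g(z)=\tau(z)/z$ is holomorphic and non-vanishing on $A$, and $|g(z)|=1$ everywhere by the previous step. A holomorphic function of constant modulus is constant by the maximum modulus principle, so $g\equiv e^{i\theta}$ for some $\theta\in\R$, whence $\tau(z)=e^{i\theta}z$. Combined with the first paragraph, this completes the proof.

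I expect the main obstacle to be the justification of the step $|\tau(z)|=|z|$, which is where the global structure of the annulus (via the boundary extension and the Dirichlet problem, or equivalently the classical description of $\mathrm{Aut}(A)$ as generated by rotations and $z\mapsto R/z$) really enters; everything else is routine. As an alternative to the boundary-extension argument, one could lift $\tau$ to the universal cover, the strip $\{w\in\C:0<\operatorname{Re}w<\log R\}$ via $w\mapsto e^{w}$, where conformal automorphisms are affine; compatibility with the deck transformation $w\mapsto w+2\pi i$ and with both boundary lines then forces a purely imaginary translation, again yielding a rotation.
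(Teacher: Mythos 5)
Your proposal is correct and follows essentially the same route as the paper: compose $\sigma$ with $z\mapsto\overline z$ to obtain a conformal automorphism of $A$ preserving each boundary component, and then invoke the fact that such an automorphism is a rotation (which the paper simply cites as well known, while you supply a proof via the Dirichlet problem for $\log|z|$ and the maximum modulus principle). Your added observation that the involution hypothesis is not actually needed is also consistent with the paper's argument.
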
 
Note that {\em a priori} the involution $\sigma$
is not defined on the boundary of $A$; so 
by invariance of the boundary components we mean that 
$\sigma(z) \to \partial_k A$ as $z\in A\to \partial_k A $ 
for each boundary component
$\partial_k A=\{z\in \C: |z|=R^k\}$ of $A$ for $k=0,1$. 

\begin{proof} Let $\tau(z)=\overline z$.
Then $\rho=\sigma\circ \tau$ is a conformal
automorphism of $A$ that preserves the boundary components
of $A$. It is well-known that then $\rho(z)=
e^{i\theta}z$ with $\theta\in \R$. Hence 
$\sigma (z)=(\rho\circ \tau)(z)=e^{i\theta}\overline{z}$.  
\end{proof} 

\begin{proof} [Proof of the Theorem.] 
We assume that we have some canonical configuration.
We will analyze the situation and will obtain an
explicit description from which existence and uniqueness will be evident. 

So suppose the disjoint arcs $\gamma_1$ and $\gamma_2$ in $\bC$ 
form a canonical 
configuration. Then by Lemma 1, there exists an  anti-conformal involution
$\sigma_k:\bC\backslash\gamma_{1-k}\to\bC\backslash\gamma_{1-k}$
fixing the points in $\gamma_k$ for $k\in\{0,1\}$.

If we restrict these maps to the ring domain 
$D=\bC\backslash(\gamma_0\cup\gamma_1)$, then
we obtain anti-conformal involutions of $D$ fixing the boundary components.
Now $D$ is conformally equivalent 
to an annulus $A=\{ z:1<|z|<R\}$ with $R>1$.
Then by  Lemma~2,  each  anti-conformal
involution $\sigma_k$ on $D$ corresponds to  a reflection $\tau_k$ in a 
 line
through the origin on $A$.

Conversely, suppose that $\tau_0$ and $\tau_1$ are two reflections
in lines through the origin. Then we can identify or ``weld"
the points on  each   boundary 
component $\partial_k A$  
of $A$ together by using the map $\tau_k$ for $k=0,1$. 
The quotient space carries a natural conformal structure,
and is hence conformally equivalent to the Riemann
sphere by the uniformization theorem. This sphere will carry two distinguished 
arcs $\ga_k$, $k=0,1$,  corresponding to each  boundary  
$\partial_k A$ after the welding. Note that each reflection $\tau_k$ 
passes to  the quotient of $A\cup \partial_k A$
as  an anti-conformal involution fixing the points corresponding to points on $\partial_k A$.
This induces an 
anti-conformal involution of $\bC\setminus \ga_{1-k}$
fixing the points  on $\ga_k$. By Lemma~1, the arc $\ga_k$
is a hyperbolic geodesic segment in the hyperbolic region 
$\bC\setminus \ga_{1-k}$.
It follows that  $\ga_0$ and $\ga_1$ form a canonical configuration. 

We have shown that {\em the canonical configurations are precisely
those that can be obtained from an  annulus 
$ A=\{ z\in \C:1<|z|< R\}$, $R>1$,  by welding
the points in each boundary component $\partial_k A$ together 
by using a reflection $\tau_k$ in a line through the origin for $k=0,1$.} 

Now    a (closed)   annulus $\overline A=\{ z\in \C:1\le |z|\le R\}$   carries an essentially unique (up to scaling) 
flat conformal metric
in which the circles $\{ z\in \C:|z|=R^k\}$, $k=0,1$,  are geodesics. It has length element
\begin{equation}\label{1}
|dz|/|z|. 
\end{equation}
The annulus $\overline A$ equipped with this metric is isometric to the cylinder 
$$\{(x_1,x_2,x_3)\in\R^3:x_1^2+x_2^2=1,\, 0\le x_3\le \log R\}$$ with the Riemannian metric
induced from $\R^3$.

For the proof of the essential uniqueness of such a  metric on $\overline A$, one extends it to $\C^*=\C\setminus\{0\}$ by successive 
reflections and then lifts it to the universal cover $\C$ by the exponential map. The resulting
conformal metric on $\C$ will be complete and flat, and hence 
equal to the Euclidean metric up to a scaling (see Huber
\cite{Huber} for an analytic approach). A representation of the conformal metric on $\overline A$ as in \eqref{1} follows.

After the welding of the boundary components  
$\partial_k A$  by   each reflection $\tau_k$,
the flat metric in \eqref{1} descends to a
flat metric on  the quotient space with possible
singularities in the points of $\ga_0\cup \ga_1$. 
Since $\partial_k A$ is a  geodesic in the flat metric \eqref{1}, 
$\gamma_k$ will be a geodesic arc in this metric with
conic singularities at the endpoints 
and the angles at these singularities are~$\pi$.

So we obtain the following geometric description of canonical configurations:
{\em to each canonical configuration corresponds a flat metric
on the sphere with four conic singularities at $a_0,a_1, a_2, a_3$
with cone angle $\pi$
such that $\gamma_0$ and $\gamma_1$ are geodesic segments.}

The converse is also true: {\em given a flat metric
on the sphere with four conic singularities $a_0,a_1, a_2, a_3$ 
with cone angles $\pi$, 
any pair of disjoint geodesics $\gamma_0$
connecting $a_0$ with $a_1$,  and $\gamma_1$ connecting $a_2$ with $a_3$
is a canonical configuration.}

To see this,  note that we can cut open the sphere along the 
arcs $\gamma_0$ and $\gamma_1$. Then up to scaling,
$D=\bC\backslash(\gamma_0\cup\gamma_1)$
equipped with the flat metric is isometric to an
annulus $A=\{ z\in \C:1<|z|<R\}$ with $R>1$,
equipped with the metric \eqref{1}.  Here each 
geodesic $\ga_k$ is doubled and represented by two circular arcs
$\alpha_k$ and $\alpha'_k$ of equal length that
have common endpoints and whose union is a boundary
component of $A$. We may assume that $\ga_k$ corresponds to  $\partial_k A=
\alpha_k\cup \alpha'_k$ for $k=0,1$.
The sphere $\bC$ with the flat metric and the geodesic 
arcs $\ga_0$ and $\ga_1$ can be recovered from
$A$ if we identify correspond points on $\alpha_k$ and $\alpha'_k$ 
by an isometry fixing the common endpoints
of $\alpha_k$ and $\alpha'_k$.
But such an isometry is necessarily given by a reflection 
$\tau_k$ in a line though the origin.
So we are back to our first description of canonical
configurations  as a quotient space of an annulus $A$. 
  
A  flat metric on the sphere $\bC$ with four prescribed
conic singularities
with angles $\pi$ gives $\bC$ the structure of a parabolic orbifold.  
The corresponding flat metric is unique up to scaling, and obtained by 
pushing the Euclidean metric in the plane forward by 
the universal orbifold covering map
$\Theta\: \C\ra \bC$. For a parabolic orbifold 
with four  conic singularities
with angles $\pi$ this universal orbifold
covering map $\Theta$ is a Weierstrass $\wp$-function
followed by a M\"obius transformation. 
With the normalization \eqref{a},
we actually have $\Theta=\wp$, where $\wp$ is as in \eqref{b}. Then     
the length element of the flat metric is given by $ds=|\wp'(z)||dz|$
and geodesic segments on $\bC$ in the flat metric are
given by images of Euclidean geodesic segments under $\wp$
(for a thorough discussion of the relevant facts about orbifolds see 
\cite[Sections 3.5, A.9, A.10]{BM}). 

For the given normalization \eqref{a}, the arc
$\ga_0$  in a canonical configuration lifts
under $\wp$ to a Euclidean line segment
$[z_0, z_1]\sub \C$ such that $\wp$ is an isometry of 
$[z_0, z_1]$ onto $\ga_0$. Here we may assume that 
$\wp(z_0)=\infty$ and $\wp(z_1)=e_1$.

Let $\Gamma\sub \C$ be the period lattice of $\wp$. 
Since we have translation invariance of
$\wp$ under $\Gamma$  and $\wp^{-1}(\infty)=\Gamma$,
we may further assume that $z_0=0$.  Since $\wp(z_1)=e_1$,
the point $z_1$ must be a half-period of $\wp$,
i.e., $z_1\in \frac12 \Gamma$.
Now 
$\wp$ is injective on $[z_0,z_1]=[0,z_1]$,
and so the point $z_1$ must be of the form  $z_1=\omega_1/2$, where 
$\omega_1\ne 0$ is a {\em primitive} element of $\Gamma$, i.e., 
$\omega_1$ cannot be represented in the form $\omega_1=n \ga$ 
with $n\in \N$, $n\ge 2$, and $\ga\in \Gamma$. It follows that 
$\ga_0=\wp([0,\omega_1/2)]$ as in  \eqref{c}. 

Since $\omega_1$ is a primitive element of $\Gamma$, there exists 
an element $\omega_2\in \Gamma$ such that $\omega_1$ and $\omega_2$ 
form  a basis of $\Gamma$. For given $\omega_1$, the choice of 
$\omega_2$ is not unique, but if we
make one choice for $\omega_2$,
then all other choices $\omega_2'$ are of the form 
\begin{equation}\label{gen}
\omega_2'=\pm \omega_2+ n\omega_1
\end{equation} 
with $n\in \Z$.
Note that $\wp$ maps the half-periods $\frac 12\omega_2'$  
to $e_2$ or $e_3$. With suitable choice of $\omega_2$ we may
assume that $\wp(\omega_2/2)=e_2$. 

We now lift the second arc $\ga_1$ in our canonical
configuration under $\wp$ to a 
line segment  $[z_2, z_3]\sub \C$ starting at $z_2=\omega_2/2$. 
Here $\wp(z_3)=e_3$, and so $z_3\in \frac12 \Gamma$ is a half-period.  

We can say more here. Since the $\wp$-function satisfies 
\begin{equation}\label{wp}
 \wp(\pm z +\alpha )=\wp(z) \text { for $z\in \C$ and $\alpha\in \Gamma$},
\end{equation}
it is invariant under reflections in half-periods. 
This implies that  $\wp^{-1} (\ga_0)$ contains the full line 
passing through $z_0=0$ and $z_1=\omega_1/2$. Similarly, 
$\wp^{-1} (\ga_1)$ contains the full line  passing through $z_2
=\omega_2/2$ and the half-period $z_3$.
Since $\ga_0$ and $\ga_1$ are disjoint,
these lines cannot meet and hence must be parallel.
It follows that $z_3$ necessarily has the form 
$z_3=\tfrac12(n\omega_1 +\omega_2)$ with $n\in \Z$. Since 
$\wp(z_3)=e_3$, the integer $n$ must be odd, and since 
$\wp$ is injective on the lift $[z_2,z_3]$, we must have 
$z_3=  \tfrac12(\pm  \omega_1 +\omega_2)$.
By reflection symmetry of $\wp$ in the half-period
$z_2=\omega_2/2$ and replacing 
the original lift $[z_2, z_3]$ by its reflection
image in $z_2$, we may assume that $z_3=\tfrac12(\omega_1+\omega_2)$. 
  
We conclude that under the normalization \eqref{a},
arcs in a canonical configuration $\ga_0$ and $\ga_1$ have the form \eqref{c}.
Conversely, arcs as in \eqref{c} form a canonical
configuration as follows from  our  geometric description of 
canonical configuration in terms of flat metrics on $\bC$. 

It remains to show that under the assumption \eqref{a}
we obtain exactly one canonical configuration of arcs in each isotopy class. 

Our previous analysis shows that in \eqref{c} the arcs 
$\ga_0$ and $\ga_1$ are uniquely determined once we know 
the primitive element $\omega_1$ up to sign. Indeed, it is clear that this
determines $\ga_0$. Moreover, the choice of $\omega_1$ up to sign
does not determine  $\omega_2$ uniquely, but it follows from 
\eqref{gen} and \eqref{wp} that the arc $\ga_2=[\wp(\omega_2/2, 
(\omega_1+\omega_2)/2]$ is uniquely determined
independent of the choice of
the sign of $\omega_1$ and the choice of $\omega_2$.

Now suppose we have chosen a fixed basis 
$\omega^0_1$ and $\omega^0_2$ of the period lattice $\Gamma$.
Then the primitive elements $\omega_1$ of $\Gamma$ are precisely
the elements of the form 
$$\omega_1=r\omega^0_1+s\omega^0_2, $$ where $r,s\in \Z$ are relatively prime.
By choosing the appropriate sign of $\omega_1$,
we may assume that $s\ge 0$ and that $r=1$ if $s=0$. 
The ratio $r/s\in \widehat\Q=\Q\cup\{\infty\}$ describes the slope 
of the line in $\C$ passing through $0$ and $\omega_1$
and we have a bijective correspondence 
between slopes $r/s\in  \widehat\Q$ and primitive
elements $\pm \omega_1$ of $\Gamma$ up to sign. 
 
Now it is a well known fact that isotopy classes of arcs in a sphere 
wit h four marked points  are in one-to-one correspondence 
with these rational slopes $r/s\in  \widehat\Q$ (see 
\cite[Chapter 2]{FM} for a related discussion). 
In our case, 
$\pm \omega_1\leftrightarrow \wp( [0, \omega_1/2])$
induces a bijective correspondence between primitive
elements $\pm \omega_1$ of $\Gamma$ up to sign and 
isotopy classes of arcs $\ga_0$ in $\bC$ with  marked points as
\eqref{a}   
(see \cite[Section 2.6] {BHI} for a thorough discussion in
the spirit of the present considerations). Note that the isotopy class of 
$\ga_0$ uniquely determines the isotopy class of the pair 
$(\ga_0, \ga_1).$ The statement follows. 
\end{proof} 

\bigskip 
\noindent 
{\bf Acknowledgment.} We thank Daniel Meyer for his remarks on this paper.

\end{document}